\documentclass[a4paper,english,reqno]{amsart}
\usepackage[utf8]{inputenc}
\usepackage{graphicx}
\usepackage{caption}

\captionsetup[figure]{labelfont=normal, textfont=normal, labelsep=period}
\usepackage{mathrsfs}
\usepackage{amsmath}
\usepackage{amsfonts}
\usepackage{amsthm}
\usepackage{amssymb}
\usepackage{amscd}
\usepackage{amstext}
\usepackage{mathtools}
\usepackage[all]{xy}
\xyoption{2cell}
\UseTwocells
\usepackage{stackengine}
\usepackage{calrsfs}
\usepackage{framed}
\usepackage[hidelinks]{hyperref}
\usepackage[noabbrev,capitalise]{cleveref}
\usepackage{url}
\usepackage{float}
\usepackage{tikz}
\usepackage{tikz-cd}
\usetikzlibrary{positioning,intersections,cd,matrix, arrows, backgrounds}
\usetikzlibrary{shapes,shapes.geometric,shapes.misc}
\usetikzlibrary{decorations}
\usetikzlibrary{decorations.pathreplacing}

\usepackage[OT2,T1]{fontenc}

\setcounter{tocdepth}{3}

\newcommand{\deff}{\textit}
\renewcommand\k{\Bbbk}
\newcommand\iso{\cong}
\newcommand{\Id}{\mathrm{id}}
\newcommand\op{^{\mathrm{op}}}
\newcommand\io{\iota}
\newcommand\ze{\zeta}

\newcommand\ZZ{\mathbb{ZZ}}
\newcommand\BL{\operatorname{BL}}
\newcommand\N{\mathbb{N}}
\newcommand\Z{\mathbb{Z}}
\newcommand\bbR{\mathbb{R}}
\newcommand\cU{{\mathbb U}}
\newcommand\R{\mathbb{R}}
\newcommand\bS{\mathbb{S}}
\newcommand\RR{\bbR\op\times \bbR}

\newcommand\ds{\oplus}

\newcommand\DS{\bigoplus\limits}
\newcommand{\cat}[1]{\ensuremath{\mathsf{#1}}}

\renewcommand{\Vec}{\cat{Vec}}

\theoremstyle{plain}
\newtheorem{theorem}{Theorem}[section]

\newtheorem{lemma}[theorem]{Lemma}

\theoremstyle{definition} 
\newtheorem{definition}[theorem]{Definition}
\newtheorem{example}[theorem]{Example}
\newtheorem{remark}[theorem]{Remark}

\providecommand\given{}
\newcommand\SetSymbol[1][]{%
  \nonscript\:#1\vert{}
  \allowbreak{}
  \nonscript\:
  \mathopen{}}
\DeclarePairedDelimiterX\Set[1]\{\}{%
  \renewcommand\given{\SetSymbol[]}
  #1
}


\newcommand\restr[2]{{
  \left.\kern-\nulldelimiterspace 
  #1 
  \vphantom{\big|} 
  \right|_{#2} 
  }}
\newcommand{\slice}[1]{{#1}'} 

\renewcommand{\subparagraph}[1]{%
  \vspace{1em}%
  \noindent\textbf{#1}\par%
  \vspace{0.5em}%
}

\begin{document}
\title{Bipath persistence as zigzag persistence}
\author{{\'A}ngel Javier Alonso and Enhao Liu}

\begin{abstract}
Persistence modules that decompose into interval modules are important in topological data analysis because we can interpret such intervals as the lifetime of topological features in the data. We can classify the settings in which persistence modules always decompose into intervals, by a recent result of Aoki, Escolar and Tada: these are standard single-parameter persistence, zigzag persistence, and bipath persistence. No other setting offers such guarantees.
  
We show that a bipath persistence module can be decomposed via a closely related infinite zigzag persistence module, understood as a covering. This allows us to translate techniques of zigzag persistence, like recent advancements in its efficient computation by Dey and Hou, to bipath persistence. In addition, and again by the relation with the infinite zigzag, we can define an interleaving and bottleneck distance on bipath persistence. In turn, the algebraic stability of zigzag persistence implies the algebraic stability of bipath persistence.
\end{abstract}

\makeatletter
\@namedef{subjclassname@2020}{\textup{2020} Mathematics Subject Classification}
\makeatother
\thanks{This research was funded in
whole, or in part, by the Austrian Science Fund (FWF) 10.55776/P33765. Enhao Liu is supported by JST SPRING, Grant Number JPMJSP2110.
}

\address{Institute of Geometry, Graz University of Technology, Graz, Austria}
\email{alonsohernandez@tugraz.at}

\address{
Department of Mathematics, Kyoto University,
Kitashirakawa Oiwake-cho, Sakyo-ku, Kyoto 606-8502, 
Japan.
}
\email{liu.enhao.b93@kyoto-u.jp}

\maketitle


\section{Introduction}\label{sec:introduction}

Intervals have an outsized role in persistent homology: we interpret them as the lifetime of topological features in the data. In standard single-parameter persistence, where we consider persistence modules over a totally ordered set, the modules always decompose into interval modules. The intervals in the decomposition are the \deff{barcode} of the module. The same happens in zigzag persistence, where we consider persistence modules over a zigzag partially ordered set (poset); all modules decompose into interval modules~\cite{botnanIntervalDecompositionInfinite2017}. Even in settings where modules do not always decompose into interval modules, like multiparameter persistence---where the underlying poset is a grid---, intervals are useful in applications. For example, consider the \deff{fibered barcode}~\cite{lesnickInteractiveVisualization2D2015}, which consists of single-parameter slices of a multiparameter module, and whose intervals, apart from being useful on their own, also give the \deff{multiparameter landscape}~\cite{vipondMultiparameterPersistenceLandscapes2020}. In this regard, it is unsurprising that much work on multiparameter persistence aims at understanding when multiparameter modules are interval-decomposable~\cite{caiElderRuleStaircodesAugmentedMetric2021,deyComputingGeneralizedRank2024,alonsoDecompositionZeroDimensionalPersistence2024,alonsoProbabilisticAnalysisMultiparameter2024a,asashibaIntervalMultiplicitiesPersistence2024,bauerGenericMultiParameterPersistence2023,asashibaIntervalDecomposability2D2022}, or at approximations by using the intervals~\cite{kimGeneralizedPersistenceDiagrams2021,lesnickInteractiveVisualization2D2015,asashibaApproximationIntervaldecomposablesInterval2023,asashiba2024interval,botnanSignedBarcodesMultiParameter2022,botnanSignedBarcodesMultiparameter2024}.

We can classify the settings where we always have interval decomposability by a recent result of Aoki, Escolar and Tada~\cite{aokiSummandinjectivityIntervalCovers2023}: persistence modules over a finite poset always decompose into interval modules if and only if the poset is
a totally ordered set (single-parameter persistence), a zigzag poset or a bipath
poset. A \deff{bipath poset} is the disjoint union of two finite totally ordered
sets, with a common bottom and top element added; see~\cref{fig:posets}.
\begin{figure}[h]
  \centering
  \includegraphics{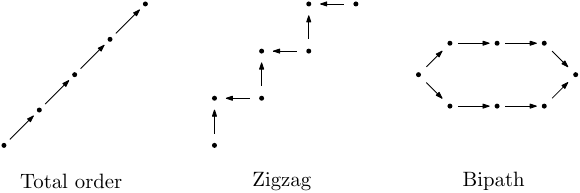}
  \caption{Posets that decompose as an interval}\label{fig:posets}
\end{figure}

One can obtain a bipath module---a persistence module over a bipath poset---by an order-preserving map between a bipath poset
and another poset, like $\R^{2}$; much like one embeds a totally ordered set to
obtain a fiber of the fibered barcode. Indeed, in~\cref{sec:fibered arccode} we
show two non-isomorphic persistence modules with equal fibered barcode, but that
can be distinguished by considering bipath modules via order-preserving maps from bipaths. This suggests the definition of the \deff{fibered arc code}
given in~\cref{sec:fibered arccode}, closely related to the fibered barcode.

Given the role they play in understanding persistence modules that always decompose into intervals, we aim to further the theoretical and computational understanding of bipath modules. Such a study has been started by Aoki, Escolar and Tada~\cite{aokiSummandinjectivityIntervalCovers2023,aokiBipathPersistence2024}. They define persistence diagrams in this context, and provide a matrix-based algorithm to decompose bipath persistence modules~\cite{aokiBipathPersistence2024}. Here, among other consequences detailed below, we show that a bipath module can be decomposed by directly decomposing a related (finite) zigzag module. This allows us to use the zigzag decomposition algorithm, in the case of the modules being the homology of a simplex-wise filtration, of Dey and Hou~\cite{deyFastComputationZigzag2022a}, which works by decomposing a related standard single-parameter persistence module. Thus, any improvement to the decomposition of standard persistence transfers to zigzag persistence and then to bipath persistence.

\subsection{Contributions} We define a poset map from an infinite zigzag to the bipath, which we call the \deff{covering map} and is inspired by the universal covering of the circle. See~\cref{fig:zigzag_cover} for an illustration.

\begin{figure}
  \centering
  \includegraphics{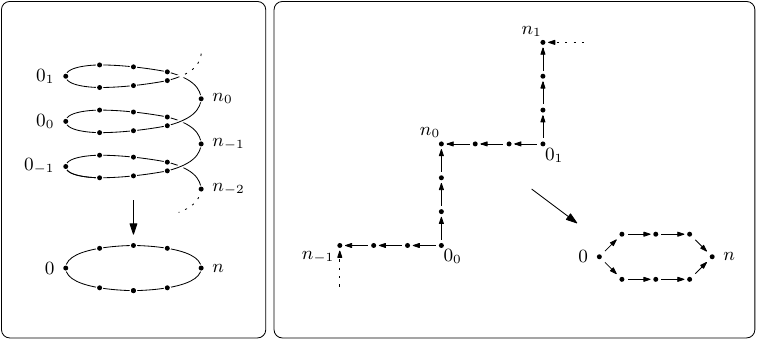}
  \caption{Illustration of the covering map defined in~\cref{sec:bipath_as_zigzag}}\label{fig:zigzag_cover}
\end{figure}

We translate elements of the theory of zigzag persistence to bipath persistence using this covering map. In~\cref{sec:bipath_as_zigzag}, we use the restriction functor induced by the covering map to obtain an infinite zigzag persistence module $R(M)$ from a bipath module $M$. Such a zigzag module $R(M)$ has an infinite but periodic barcode, and the barcodes of $R(M)$ and $M$ determine each other, in the sense of~\cref{thm:barcodes}.

In~\cref{sec:stability}, we establish algebraic stability for bipath persistence. This is a consequence of the algebraic stability of zigzag modules, which is, in turn, a consequence of the algebraic stability of block-decomposable modules~\cite{botnanAlgebraicStabilityZigzag2018,bjerkevikStabilityIntervalDecomposable2021}. Specifically, we define the interleaving and bottleneck distance between bipath modules $M$ and $N$ to be the interleaving and bottleneck distance between $R(M)$ and $R(N)$, as zigzag modules---much like one does for zigzag modules and block-decomposable modules~\cite{botnanAlgebraicStabilityZigzag2018,bjerkevikStabilityIntervalDecomposable2021}.

In~\cref{sec:computation}, we observe that to decompose the bipath module one does not need to consider the infinite zigzag module in its entirety, but just a finite slice of it, of size $O(K)$, where $K$ is the maximum length of a chain in the bipath poset.

Finally, in \cref{sec:fibered arccode} we define the fibered arc code for
persistence module over $\R^{2}$, and compare it with other invariants through
examples.

\subsection{Related work}

Aoki, Emerson and Tada initiated the study of bipath persistence and provided an algorithm to decompose bipath modules and a way to visualize such a decomposition~\cite{aokiBipathPersistence2024}. Their algorithm is based on column and row operations on matrices. In a recent seminar, Tada \cite{Tada2024APATG} has mentioned ongoing work on a stability theory for functions from a certain continuous version of bipath posets to topological spaces. A priori, this is unrelated to the algebraic stability theorem we state here, which is a direct consequence of the covering map of~\cref{sec:bipath_as_zigzag} that relates bipath to zigzag persistence.

Also recently, Asashiba and
Liu~\cite{asashibaIntervalMultiplicitiesPersistence2024} decompose a bipath module by decomposing two related zigzag modules, inspired by the notion of~\textit{essential cover} that they introduce. In our approach, we use a single zigzag, which, apart from simplifying the computation, allows us to translate the algebraic stability of zigzag modules to this setting, as mentioned.

In the context of circle-valued maps and level persistence, Burghelea and
Dey~\cite{burgheleaTopologicalPersistenceCircleValued2013} have studied
(pointwise finite-dimensional) persistence modules over finite subsets of the
circle $\bS^{1}$ equipped with a specific partial order. Such persistence modules decompose as a direct sum of interval modules and Jordan cells. Since their methods can be used to decompose zigzag modules, and as a consequence of our work, their methods could also be applied to decompose bipath modules. 

Sala and Schiffmann~\cite{salaFockSpaceRepresentation2021} have studied a similar finite persistence module over $\bS^{1}$ with the cyclic order. Hanson and Rock~\cite{hansonDecompositionPointwiseFinitedimensional2024} study
continuous pointwise finite-dimensional persistence modules over $\bS^{1}$
equipped with certain general partial orders (which include the cyclic order and the bipath order) and provide a decomposition theorem into intervals and Jordan cells. 
 Recently, Gao and Zhao~\cite{gaoIsometryTheoremContinuous2024} prove an isometry theorem for nilpotent persistence modules over $\bS^{1}$ with the cyclic order.

\subsection{Acknowledgments}
The authors would like to thank Prof. Hideto Asashiba for helpful
conversations on the naming of the arc code, and fruitful discussions on the covering technique of representations over the quiver of type $\tilde{A}$.

\section{Bipath as zigzag}\label{sec:bipath_as_zigzag}

In this section, we cover the bipath poset with an infinite zigzag, and describe its consequences.

We take a functorial perspective. A \deff{persistence module} over a poset $P$ is a functor $F\colon P\to\Vec$ from $P$, viewed as a category, to the category $\Vec$ of finite-dimensional vector spaces over a fixed field $\k$. We denote by $\Vec^P$ the category of persistence modules over $P$ where the morphisms are natural transformations. We denote the internal maps of a persistence module $F$ by $F_{p\to q}$ for all $p\leq q \in P$. A persistence module $F$ is \deff{indecomposable} if $F\iso A\oplus B$ then either $A = 0$ or $B = 0$.

Throughout this work, a \deff{multiset} $A$ is a pair $(S, \mu)$ where $S$ is a set and $\mu\colon S\to\N_{\ge 0}$ is a function that specifies the multiplicity of $s\in S$ in $A$. We consider actions on multisets in the following sense: a group $G$ has an action $\alpha$ on the multiset $A$ if $G$ has an action $\alpha\colon G\times S\to S$ on $S$ such that $\mu(\alpha(g, a)) = \mu(a)$. Such an action $\alpha$ is \deff{free} if $\alpha\colon G\times S\to S$ is free.

\subsection{Intervals in a bipath} Let us start by describing how bipath modules (that is, persistence modules over the bipath poset) decompose into interval modules, following Aoki, Emerson and Tada~\cite{aokiBipathPersistence2024}. Recall that an \deff{interval} $I$ of a poset $P$ is a connected non-empty subposet that is also convex, meaning that if for three $p\leq z \leq q$ we have that $p,q\in I$ then $z\in I$ too. The \deff{interval module supported on an interval $I\subset P$}, denoted by $\k I$, is the indecomposable persistence module (see~\cite[Proposition 2.2]{botnanAlgebraicStabilityZigzag2018}) given by
\begin{equation*}
  (\k I)_{p} =
  \begin{cases}
    \k, & \text{if $p\in I$,}\\
    0, & \text{otherwise,}
  \end{cases}
  \quad\text{with internal maps}\quad
  (\k I)_{p\to q} =
  \begin{cases}
    \Id, & \text{if $p,q\in I$,}\\
    0, & \text{otherwise.}
  \end{cases}
\end{equation*}

We work on a fixed bipath poset $B$. That is, we fix $n, m\in\N$ greater than $1$ and set $B \coloneqq \Set{0, \dots, n + m - 1}$ with the order given by two maximal chains $0 < 1 < \cdots < n$ and $0 < n + m - 1 < n + m - 2 < \cdots < n$:
\[
  \begin{tikzcd}[ampersand replacement=\&, column sep=small, row sep=small]
	\& 1 \& 2 \& \cdots \& n - 1 \\
	0 \&\&\&\&\& n. \\
	\& {n + m - 1} \& {n + m - 2} \& \cdots \& {n + 1}
	\arrow[from=1-2, to=1-3]
	\arrow[from=1-3, to=1-4]
	\arrow[from=1-4, to=1-5]
	\arrow[from=1-5, to=2-6]
	\arrow[from=2-1, to=1-2]
	\arrow[from=2-1, to=3-2]
	\arrow[from=3-2, to=3-3]
	\arrow[from=3-3, to=3-4]
	\arrow[from=3-4, to=3-5]
	\arrow[from=3-5, to=2-6]
\end{tikzcd}
\]

\newcommand{\lefti}{\vartriangleleft}
\newcommand{\righti}{\vartriangleright}
\newcommand{\barcode}{\mathcal{B}}
\newcommand{\full}{\mathrm{full}}
By the result of Aoki, Escolar and Tada~\cite[Theorem 1.3]{aokiSummandinjectivityIntervalCovers2023} mentioned in the
introduction, every persistence module over $B$ decomposes into interval modules. It is straightforward to verify that an interval of $B$ belongs to one of the following types (see~\cref{fig:intervals}).
\begin{enumerate}
  \item \deff{The full interval}, which is the poset $B$ itself,
  \item \deff{a left interval} for some
        $i\in \Set{0, 1, \dots, n-1}$ and $j\in\Set{0, n + m -1, \dots, n+1}$, such
        that
        \begin{equation*} [i,j]_{\lefti} \coloneqq \Set{z \in B \given 0 \leq z \leq i} \cup \Set{z\in B \given 0 \leq z \leq j},
        \end{equation*}
  \item \deff{a right interval} for some $i\in \Set{1, \dots, n-1, n}$ and $j\in\Set{n + m -1, \dots, n+1, n}$, such that
        \begin{equation*} [i,j]_{\righti} \coloneqq \Set{z \in B \given i \leq z \leq n} \cup \Set{z\in B \given j \leq z \leq n},
        \end{equation*}
  \item \deff{a top interval} for some $i, j\in \Set{1, \dots, n-1}$ such that
        \begin{equation*} [i,j]_{\top} \coloneqq \Set{z \in B \given i \leq z \leq j}, \text{
                and}
            \end{equation*}
  \item \deff{a bottom interval} for some $i, j\in \Set{n+ m - 1, \dots, n+1}$ such that
        \begin{equation*} [i,j]_{\bot} \coloneqq \Set{z \in B \given j \leq z \leq i}.
        \end{equation*}
\end{enumerate}
\begin{figure}[h]
  \centering
  \includegraphics{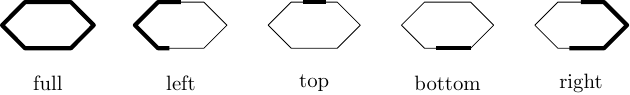}
  \caption{Types of intervals of the bipath poset}\label{fig:intervals}
\end{figure}

Since every bipath module is interval-decomposable, and by the Krull-Schmidt
theorem, there is a uniquely defined multiset of intervals of $B$ for every bipath module $M$, denoted by $\barcode(M)$, such that
\begin{equation}\label{eq:ind-decomp-of-M}
  M\iso \bigoplus_{I\in\barcode(M)} \k I.
\end{equation}
In what follows, instead of the \emph{bar}code, we would call $\barcode(M)$ the~\deff{arc code} of the bipath module $M$. See \cref{fig:bipath_arccode} for an illustration.

\begin{figure}[h]
  \centering
  \includegraphics{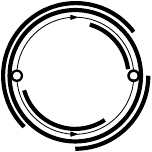}

  \caption{Illustration of an arc code}\label{fig:bipath_arccode}
\end{figure}

Considering the previously mentioned types of intervals, the arc code $\barcode(M)$ can be written as the union
\begin{equation*}
  \barcode(M) = \barcode_{\full}(M)\cup\barcode_{\lefti}(M)\cup\barcode_{\righti}(M)\cup\barcode_{\bot}(M)\cup\barcode_{\top}(M),
\end{equation*}
where $\barcode_{\full}(M)$ denotes the submultiset of $\barcode(M)$ consisting of full intervals, $\barcode_{\lefti}(M)$ denotes the submultiset consisting of left intervals, and analogously for the rest.

\subsection{Covering map}
The \deff{zigzag poset} $\ZZ$ is the subposet of $\Z\op\times\Z$ given by $\Set{(a, b) \in \Z\op\times\Z \given a = b \text{ or } a = b + 1}$. A zigzag module is a persistence module over the zigzag poset.

\begin{definition}\label{def:covering}
We define a poset map $\zeta\colon\ZZ\to B$ that we call the \deff{covering map} as follows, where we set $N \coloneqq n + m - 1$,
\begin{align*}
    &\zeta(kN+1, kN) = 0, & \\
    &\zeta(kN + i, kN + i) = \zeta(kN + i + 1, kN + i) = i, &\hspace{-1ex}\text{for } i\in \Set{1, \dots, n - 1},\\
    &\zeta(kN + n, kN + n) = n, \\
    &\zeta(kN - i + 1, kN - i) = \zeta(kN - i + 1, kN - i + 1) = n + m - i, &\hspace{-1ex}\text{for } i\in \Set{1, \dots, m - 1},
\end{align*}
for every $k\in \Z$. See~\cref{fig:covering} for an illustration.
\end{definition}

\begin{figure}[h]
  \centering
  \includegraphics{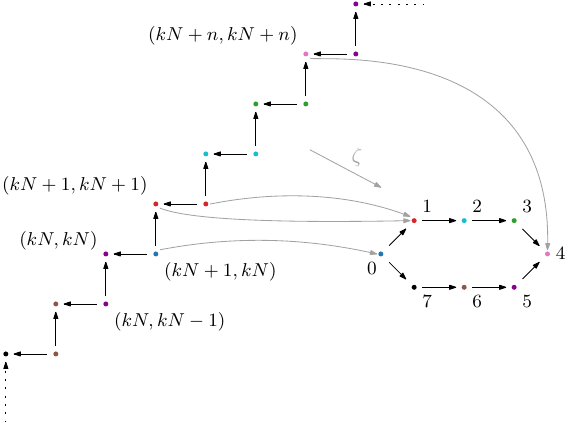}
  \caption{Illustration of the covering map $\zeta\colon\ZZ\to B$. In this case,
  $n = m = 4$}\label{fig:covering}
\end{figure}

The covering map $\zeta\colon\ZZ\to B$ induces a functor
$R\colon \Vec^{B}\to\Vec^{\ZZ}$, called the \deff{restriction functor},
by precomposition $M\mapsto M\circ\zeta$. It is easy to check, since our persistence modules are pointwise finite-dimensional, the restriction functor $R$ preserves direct sums.

Shortly, we describe the arc code of the bipath module $M$ in terms of the barcode of $R(M)$. Indeed, persistence modules over $\ZZ$ (when they are pointwise finite-dimensional, like here) have a barcode, that is, for every module $Z\in\Vec^{\ZZ}$ there is a unique multiset $\barcode(Z)$ of intervals such that $Z\iso \bigoplus_{I\in \barcode(Z)} \k I$, by a result of Botnan~\cite{botnanIntervalDecompositionInfinite2017}.

Following Bjerkevik~\cite{bjerkevikStabilityIntervalDecomposable2021}, we adopt
the following notation to refer to the intervals of $\ZZ$, where
$a, b\in\Z\cup\Set{\pm\infty}$:
\begin{align*}
  &[a,b]_{\ZZ} \coloneqq \Set{(c,d)\in\ZZ \given c\leq b, d\geq a},&[a, b)_{\ZZ} &\coloneqq \Set{(c,d)\in\ZZ \given a\leq d < b},\\
  &(a, b]_{\ZZ} \coloneqq \Set{(c, d)\in\ZZ \given a < c \leq b},&(a, b)_{\ZZ} &\coloneqq \Set{(c, d)\in\ZZ\given c > a, d < b}.
\end{align*}

We describe the image under the restriction functor
$R\colon \Vec^{B}\to\Vec^{\ZZ}$ of the different types of intervals of a bipath
$B$ in the following lemma, illustrated in~\cref{fig:zz_decomp}.

\begin{lemma}\label{lem:bip-zz-int-corr}
For each type of interval module over the bipath poset $B$, the following results hold.
\begin{itemize}
    \item For the full interval $B$, $R(\k B) = \k \ZZ$.
    \item For a left interval $[i,j]_{\lefti}$:
\begin{itemize}
    \item if $j=0$, then $R(\k [i, 0]_{\lefti}) = \bigoplus_{k\in \Z} \k(kN, kN + i + 1)_{\ZZ}$.
    \item if $j\neq 0$, then $R(\k [i, j]_{\lefti}) = \bigoplus_{k\in \Z} \k(kN - n - m + j, kN + i + 1)_{\ZZ}$.
\end{itemize} 
    \item For a right interval $[i, j]_{\righti}$, $R(\k [i,j]_{\righti}) = \bigoplus_{k\in \Z} \k [kN + i, kN + j]_{\ZZ}$.
    \item For a top interval $[i, j]_{\top}$, $R(\k [i,j]_{\top}) = \bigoplus_{k\in \Z} \k [kN + i, kN + j + 1)_{\ZZ}$.
    \item For a bottom interval $[i, j]_{\bot}$, $R(\k [i,j]_{\bot}) = \bigoplus_{k\in \Z} \k (kN + i - 1, kN + j]_{\ZZ}$.
  \end{itemize}
Recalling that $N\coloneqq n + m - 1$.
\end{lemma}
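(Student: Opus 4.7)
The plan is to use that the restriction functor $R$ is precomposition with the poset map $\zeta$: the module $R(\k I) = \k I\circ\zeta$ assigns $\k$ to $(a,b)\in\ZZ$ whenever $\zeta(a,b)\in I$ and $0$ otherwise, with identity maps between any two such points. In other words, $R(\k I)$ is supported on the preimage $\zeta^{-1}(I)\subseteq \ZZ$, and within this support it looks like a direct sum of interval modules, one for each connected component of $\zeta^{-1}(I)$ in the Hasse diagram. Because every connected subset of the zigzag $\ZZ$ is a contiguous run of alternating peaks and valleys and therefore an interval in one of the four bracket types $[a,b]_{\ZZ},[a,b)_{\ZZ},(a,b]_{\ZZ},(a,b)_{\ZZ}$, we obtain $R(\k I)\iso\bigoplus_J \k J$ with $J$ ranging over the connected components of $\zeta^{-1}(I)$, and the lemma reduces to enumerating those components case by case.

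To carry out the enumeration, I would first tabulate how $\zeta$ labels one period. Writing $P_\ell=(\ell,\ell)$ for peaks and $V_\ell=(\ell,\ell-1)$ for valleys, the $k$-th period, between the consecutive $0$-valleys $V_{kN+1}$ and $V_{(k+1)N+1}$, carries $V_{kN+1}\mapsto 0$, then the top run $P_{kN+1},V_{kN+2},P_{kN+2},\ldots,V_{kN+n},P_{kN+n}$ with labels $1,1,2,2,\ldots,n-1,n-1,n$, followed by the bottom run $V_{kN+n+1},P_{kN+n+1},\ldots,V_{(k+1)N},P_{(k+1)N}$ with labels $n+1,n+1,\ldots,n+m-1,n+m-1$. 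For each type of interval $I$ one then marks which peaks and valleys lie in $\zeta^{-1}(I)$ and translates each contiguous run into the appropriate bracket according to its endpoints: peak-to-peak is $[\cdot,\cdot]_{\ZZ}$, peak-to-valley is $[\cdot,\cdot)_{\ZZ}$, valley-to-peak is $(\cdot,\cdot]_{\ZZ}$, and valley-to-valley is $(\cdot,\cdot)_{\ZZ}$.

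For the full, right, top, bottom, and $j=0$ left cases the contributing run fits inside a single period and is bounded on both sides by nodes mapping outside $I$, so distinct periods contribute disjoint summands and the claimed intervals fall out by reading off the endpoints. The only case requiring care, and the main obstacle in the bookkeeping, is a general left interval $[i,j]_{\lefti}$ with $j\neq 0$, because the $0$-valley $V_{kN+1}$ is adjacent in $\ZZ$ to $P_{kN}=P_{(k-1)N+N}$, and this peak, which maps to $N$, always lies in the bottom-chain preimage of period $k-1$ (since $j\leq N$). The connected component through $V_{kN+1}$ therefore starts at $V_{(k-1)N+j}$, runs through $P_{kN}$ and $V_{kN+1}$, and ends at $V_{kN+i+1}$; maximality follows from checking that the immediate neighbours $P_{(k-1)N+j-1}$ and $P_{kN+i+1}$ map outside $[i,j]_{\lefti}$. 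Rewriting $(k-1)N+j-1 = kN-n-m+j$ then produces the valley-to-valley interval $(kN-n-m+j,\,kN+i+1)_{\ZZ}$ claimed in the lemma, and reindexing over $k\in\Z$ yields the stated direct sum.
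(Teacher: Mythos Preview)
Your proposal is correct and is exactly the direct verification the paper has in mind: the lemma is stated without proof, with the authors pointing only to the illustration in \cref{fig:zz_decomp}, so the argument you outline---compute $\zeta^{-1}(I)$, identify its connected components in $\ZZ$, and read off the bracket type from whether each endpoint is a peak or a valley---is precisely the routine check being left to the reader. Your handling of the one nontrivial case, the left interval with $j\neq 0$ whose preimage component crosses the $0$-valley into the previous period, is accurate, including the reindexing $(k-1)N+j-1=kN-n-m+j$.
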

\begin{figure}
  \centering
  \includegraphics{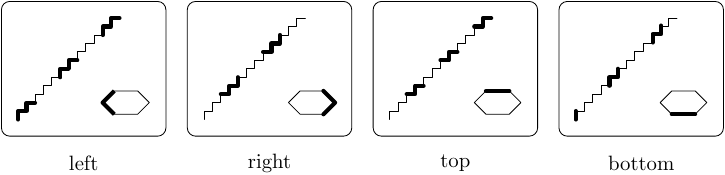}
  \caption{Illustration of interval modules over the bipath poset, and their image under the restriction functor $R$, as in~\cref{lem:bip-zz-int-corr}}~\label{fig:zz_decomp}
\end{figure}

Note that each $R(\k I)$ for an interval $I$ of $B$, except for the full interval, is an infinitely \textit{periodic} direct sum of interval modules over $\ZZ$ with disjoint support. Due to this periodicity and since $R$ preserves direct sums, for a bipath module $M$, we have a free action of $\Z$ on the barcode $\barcode(R(M))$, given by adding $zN$ to each endpoint of an interval $J$ for $z\in \Z$ and $J\in \barcode(R(M))$. We denote by $z\uparrow J\in \barcode(R(M))$ the result of $z$ acting on $J$. In conclusion, we obtain the following theorem.

\begin{theorem}\label{thm:barcodes}
Let $M$ be a bipath module and $\barcode(M)\coloneqq (S, \mu)$ the arc code of $M$. Then the barcode $\barcode(R(M))\coloneqq (S', \mu')$ of the zigzag module $R(M)$ is a complete invariant up to isomorphism.

Moreover, there is a bijection $\phi\colon S \iso S'/\Z$ such that $\mu'(\phi(I)) = \mu(I)$ holds for each $I\in S$. Here $S'/\Z$ denotes the orbit space of $S'$.
\end{theorem}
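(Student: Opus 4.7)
The plan is to read off the barcode of $R(M)$ directly from the arc code of $M$ using the fact that $R$ preserves direct sums together with \cref{lem:bip-zz-int-corr}, and then to show that this correspondence is invertible in a canonical way.

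First, I would write $M\iso\bigoplus_{I\in S}(\k I)^{\mu(I)}$ and apply $R$ to get $R(M)\iso\bigoplus_{I\in S}R(\k I)^{\mu(I)}$. Substituting each $R(\k I)$ according to \cref{lem:bip-zz-int-corr} produces an explicit interval decomposition of $R(M)$ over $\ZZ$; by Botnan's interval decomposition theorem, this is exactly $\barcode(R(M))$. I would then define $\phi\colon S\to S'/\Z$ by sending each $I\in S$ to the $\Z$-orbit of any interval of $\ZZ$ that appears in $R(\k I)$. That this is well-defined is immediate from the lemma: for each non-full $I$, the intervals of $\ZZ$ in $R(\k I)$ form a single $\Z$-orbit under the shift by $N$, and for $I=B$ the module $R(\k B)=\k\ZZ$ is a $\Z$-fixed point.

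The main work is showing $\phi$ is a bijection with $\mu'(\phi(I))=\mu(I)$. For injectivity I would observe that the five cases of \cref{lem:bip-zz-int-corr} produce intervals of $\ZZ$ of five different \emph{boundary shapes}: the full interval $\ZZ$ (from $\k B$), closed-closed $[a,b]_{\ZZ}$ (right), open-open $(a,b)_{\ZZ}$ (left), closed-open $[a,b)_{\ZZ}$ (top), and open-closed $(a,b]_{\ZZ}$ (bottom). These shapes are intrinsic to the interval of $\ZZ$ and $\Z$-equivariant, so orbits of different types are disjoint. Within a single type, the parameters $(i,j)$ defining the bipath interval are recovered from the endpoints of any representative modulo $N$ by inverting the explicit formulas of \cref{lem:bip-zz-int-corr}. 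Surjectivity follows because every interval appearing in $\barcode(R(M))$ is, by construction, one of the intervals listed in the decomposition of some $R(\k I)^{\mu(I)}$ with $I\in S$. The multiplicity statement is then routine: each interval of $\ZZ$ in $R(\k I)$ appears there with multiplicity one, so $R(\k I)^{\mu(I)}$ contributes each element of the orbit $\phi(I)$ with multiplicity exactly $\mu(I)$, and distinct $I\in S$ contribute to distinct orbits by injectivity of $\phi$, so there is no collision of multiplicities.

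Finally, for the completeness-of-invariant part, I would note that the above recipe is intrinsic: given only $\barcode(R(M))$ together with the $\Z$-action, one partitions $S'$ into orbits, reads off the boundary shape of each orbit, inverts the formulas of \cref{lem:bip-zz-int-corr} to recover the bipath interval, and transfers the (common) multiplicity along $\phi^{-1}$. This reconstructs $\barcode(M)$, which by the Krull--Schmidt theorem determines $M$ up to isomorphism. The only place where some care is required is the bookkeeping in the injectivity step, since one must verify that the parameter ranges $i\in\Set{0,\dots,n-1}$, $j\in\Set{0,n+m-1,\dots,n+1}$ (and analogously for the other types) are in bijection with the possible endpoint residues modulo $N$ prescribed by the lemma; everything else is formal.
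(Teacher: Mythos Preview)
Your proposal is correct and follows essentially the same approach as the paper: the paper does not give a separate formal proof but presents the theorem as an immediate conclusion of \cref{lem:bip-zz-int-corr}, the fact that $R$ preserves direct sums, and Botnan's decomposition theorem for infinite zigzags, together with the observed $\Z$-periodicity of the resulting intervals. Your write-up simply spells out in detail the bijection and the injectivity bookkeeping that the paper leaves implicit.
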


\section{Algebraic stability}\label{sec:stability}
As we have seen in~\cref{thm:barcodes}, the covering map $\zeta\colon\ZZ\to B$ allows us to translate the arc code of the bipath module to the barcode of the infinite zigzag module, and vice versa. Furthermore, and as we do shortly, the covering map can be used to study the algebraic stability of bipath modules, via the algebraic stability of zigzag modules, which is well-established following Botnan and Lesnick~\cite{botnanAlgebraicStabilityZigzag2018} and Bjerkevik~\cite{bjerkevikStabilityIntervalDecomposable2021}. These stability results depend on the concept of block-decomposable modules, which we review first after recalling the definitions of interleavings and bottleneck distance.

\subsection{Interleavings and matchings} We review interleavings over $\RR$ and the bottleneck distance. Recall that for an $\epsilon\in [0, \infty]$, we define the \deff{shift functor} $(\cdot)(\epsilon)\colon \Vec^{\RR} \to \Vec^{\RR}$ by sending $M$ to the persistence module given by $M(\epsilon)_{(x, y)} \coloneqq M_{(x-\epsilon, y+\epsilon)}$ and $M(\epsilon)_{(x, y)\to (w, z)} \coloneqq M_{(x-\epsilon, y+\epsilon)\to (w-\epsilon, z+\epsilon)}$. On morphisms $f\colon M\to N$, we set $f(\epsilon)_{(x, y)}\coloneqq f_{(x-\epsilon, y+\epsilon)}$. For every module $M$, we define a morphism $\phi^M_\epsilon\colon M\to M(\epsilon)$ by the internal maps $M_{(x, y)\to (x-\epsilon, y+\epsilon)}$. We say that two persistence modules $M, N\in\Vec^{\RR}$ are \deff{$\epsilon$-interleaved} if there exist morphisms $f\colon M\to N(\epsilon)$ and $g\colon N\to M(\epsilon)$ such that $g(\epsilon)\circ f = \phi^M_{2\epsilon}$ and $f(\epsilon)\circ g = \phi^N_{2\epsilon}$.

As in~\cite{bjerkevikStabilityIntervalDecomposable2021}, we describe matchings in terms of interleavings between interval modules. Let $A$ and $B$ be two multisets of intervals of $\RR$. An \deff{$\epsilon$-matching} between $A$ and $B$ is given by a bijection $\sigma\colon A'\to B'$ for some subsets $A'\subset A$ and $B'\subset B$ such that
\begin{itemize}
  \item every interval $I$ in $A\setminus A'$ and $B\setminus B'$ satisfies that
        $\k I$ is $\epsilon$-interleaved with the zero module, and
  \item for every $I\in A'$, the modules $\k I$ and $\k\sigma(I)$ are
        $\epsilon$-interleaved.
\end{itemize}
Then, the \deff{interleaving distance} $d_{I}(M, N)$ between two persistence modules $M, N\colon$ $\RR\to\Vec$ and the \deff{bottleneck distance} $d_{B}(C, D)$ between two multisets of intervals $C$ and $D$ are given by
\begin{align*}
  d_{I}(M, N) &\coloneqq \inf\Set{\epsilon \given \text{$M$ and $N$ are $\epsilon$-interleaved}},\\
  d_{B}(C, D) &\coloneqq \inf\Set{\epsilon \given \text{there is an $\epsilon$-matching between $C$ and $D$}}.
\end{align*}
In the sequel, for two persistence modules $M$ and $N$, $d_{B}(M, N)$ denotes the bottleneck distance between the barcodes $\barcode(M)$ and $\barcode(N)$. Namely, $d_{B}(M, N)\coloneqq d_{B}(\barcode(M), \barcode(N))$.

\subsection{Block-decomposable modules} Let $\cU$ be the subposet of $\RR$ given by $\Set{(a, b)\in \RR \given a\leq b}$. As in~\cite{botnanAlgebraicStabilityZigzag2018,bjerkevikStabilityIntervalDecomposable2021}, we are interested in the following four types of intervals of $\cU$.

\begin{definition}
The following intervals of $\cU$ are called \deff{blocks}, where $a, b\in \bbR\cup\Set{\pm\infty}$ and $a \le b$,
\begin{align*}
&[a,b]_{\BL} \coloneqq \Set{(c, d)\in \cU \given c\le b, d\ge a}, &[a, b)_{\BL} &\coloneqq \Set{(c, d)\in \cU \given a\le d<b},\\
&(a, b]_{\BL} \coloneqq \Set{(c, d)\in \cU \given a<c\le b}, &(a, b)_{\BL} &\coloneqq \Set{(c, d)\in \cU \given c>a, d<b}.
\end{align*}

A \deff{block module} is an interval module $\k I$, where $I$ is a block as defined above. A persistence module over $\cU$ is \deff{block-decomposable} if it decomposes into a direct sum of block modules.
\end{definition}

The following establishes the algebraic stability of block-decomposable modules and is due to Bjerkevik~\cite[Theorem 4.18]{bjerkevikStabilityIntervalDecomposable2021}.

\begin{theorem}[Block isometry theorem]\label{thm:block_isometry}
  For two block-decomposable modules $M$ and $N$, $d_{I}(M, N) = d_{B}(M, N)$.
\end{theorem}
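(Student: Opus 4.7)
The plan is to prove the two inequalities $d_{I}(M,N) \leq d_{B}(M,N)$ and $d_{B}(M,N) \leq d_{I}(M,N)$ separately, following the standard scheme for isometry theorems of this flavor.

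For the easy direction $d_{I} \leq d_{B}$, given any $\epsilon$-matching $\sigma\colon \barcode(M)' \to \barcode(N)'$, I assemble an $\epsilon$-interleaving blockwise: each matched pair $\k I$ and $\k\sigma(I)$ contributes the interleaving guaranteed by the definition of matching, and each unmatched block gets paired with the zero module. Since the shift functor commutes with direct sums and the interleaving identities are linear in each summand, these local pieces assemble into morphisms $M \to N(\epsilon)$ and $N \to M(\epsilon)$, and verifying that the two interleaving squares commute reduces to a finite case analysis over the four types of blocks.

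The reverse direction $d_{B} \leq d_{I}$ is the substantive content. Fixing an $\epsilon$-interleaving $(f,g)$, the first step is to extract, for each block type, a decomposition-independent dimension count associated to every block $I$: typically the rank of a composite of internal maps of $M$ between two fibers pinned down by the endpoints of $I$. Interleaving with $N$ then yields, via the interleaving identities applied to these distinguished fibers, inequalities comparing these counts in $M$ to analogous counts in $N$ shifted by $\epsilon$. The final step is to convert these rank inequalities, type by type, into an explicit bijection between submultisets of $\barcode(M)$ and $\barcode(N)$ satisfying the $\epsilon$-matching condition.

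The principal obstacle, and the technical heart of the theorem, is this last matching step. The Bauer--Lesnick induced matching that handles single-parameter barcodes does not transfer directly, because block-decomposable modules live over $\cU \subset \RR$ and the four block types interact in ways that obstruct a canonical construction driven by a single endpoint. My approach, following Bjerkevik~\cite{bjerkevikStabilityIntervalDecomposable2021}, would be to apply a linear-algebraic strengthening of Hall's marriage theorem separately within each block type, showing that the rank inequalities derived in the previous step are exactly strong enough to guarantee a system of distinct representatives and hence the desired $\epsilon$-matching. Particular care is needed for blocks with endpoints at $\pm\infty$, which appear as boundary cases in the Hall argument and have to be treated separately to ensure that unmatched blocks really are $\epsilon$-interleaved with the zero module.
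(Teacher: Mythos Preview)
The paper does not give its own proof of this statement: it is quoted as a known result, attributed to Bjerkevik~\cite[Theorem 4.18]{bjerkevikStabilityIntervalDecomposable2021}, and used as a black box. So there is nothing in the paper to compare your proposal against.

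That said, your outline is an accurate high-level sketch of Bjerkevik's actual argument. The easy direction $d_I \le d_B$ is exactly as you describe. For the hard direction, Bjerkevik does indeed work type by type (consistent with \cref{lem:structure_matchings}) and reduces the existence of an $\epsilon$-matching to a Hall-type condition, verified via dimension counts coming from the interleaving. One point worth sharpening if you flesh this out: the ``linear-algebraic strengthening of Hall's marriage theorem'' you allude to is Bjerkevik's key technical lemma, and its hypotheses are not the naive rank inequalities but a more refined counting statement (roughly, for every finite sub-multiset $A$ of blocks on one side, the number of blocks on the other side that are $\epsilon$-close to some element of $A$ is at least $|A|$). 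Deriving that from the interleaving is where the real work lies, and your sketch passes over it. Still, as a plan this is correct and matches the cited source.
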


We have the following on the structure of matchings between barcodes of
block-decomposable modules, see~\cite[Lemma
3.1]{botnanAlgebraicStabilityZigzag2018}.

\begin{lemma}\label{lem:structure_matchings}
  Let $M$ and $N$ be block-decomposable modules and consider an
  $\epsilon$-matching $\sigma\colon C\to D$ for subsets $C\subset\barcode(M)$
  and $D\subset\barcode(N)$. Then the blocks $I$ and $\sigma(I)$ are of the same type for
  every $I\in C$.
\end{lemma}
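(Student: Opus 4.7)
The plan is to reduce the lemma to the interval-level claim that two block modules $\k I$ and $\k I'$ with $I, I' \subset \cU$ of different block types cannot be $\epsilon$-interleaved (in a nondegenerate sense) for any finite $\epsilon \ge 0$. Applied to each matched pair $(I, \sigma(I))$, which is $\epsilon$-interleaved by the definition of an $\epsilon$-matching, this immediately yields the conclusion.

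To establish the interval-level claim, I would first characterize the four block types intrinsically, by the asymptotic reach of their supports within $\cU$. Writing each type with finite endpoints $a \le b$: the type $[a,b]_{\BL}$ contains sequences with both $c \to -\infty$ and $d \to +\infty$; the type $[a,b)_{\BL}$ reaches $c \to -\infty$ only, with the $d$ coordinate bounded above by $b$; the type $(a,b]_{\BL}$ reaches $d \to +\infty$ only, with $c$ bounded below by $a$; and $(a,b)_{\BL}$ is bounded in both coordinates. The degenerate cases in which an endpoint is $\pm\infty$ collapse two of these types into one (for instance, $[a, +\infty]_{\BL}$ and $[a, +\infty)_{\BL}$ coincide as subsets of $\cU$), so the classification remains unambiguous.

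Next, I would show that an $\epsilon$-interleaving $(f, g)$ transports these asymptotic reaches. Unwinding the composition $g(\epsilon) \circ f = \phi^{\k I}_{2\epsilon}$, any point $(x, y) \in I$ whose $2\epsilon$-shift $(x - 2\epsilon, y + 2\epsilon)$ also lies in $I$ forces $(x - \epsilon, y + \epsilon) \in I'$. Consequently, an unbounded sequence in $I$ with $c \to -\infty$ gives rise, after the $\epsilon$-shift, to an unbounded sequence in $I'$ with the same reach, and symmetrically for $d \to +\infty$ and for the roles of $I$ and $I'$ swapped via $f(\epsilon) \circ g = \phi^{\k I'}_{2\epsilon}$. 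The matching unboundedness profiles in each coordinate then force $I$ and $I'$ to have the same type.

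The main obstacle is the degenerate case where $\epsilon$ is so large relative to the extent of $I$ and $I'$ that the $2\epsilon$-robust subsets are empty and the asymptotic argument above extracts no information. In such situations one must refine the argument by tracking the ranks of $f$ and $g$ on the small boundary regions of $I$ and $I'$ to derive a contradiction when the types differ. This detailed case analysis is precisely the one carried out in the proof of~\cite[Lemma~3.1]{botnanAlgebraicStabilityZigzag2018}, which I would follow verbatim for the remaining cases.
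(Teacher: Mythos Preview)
The paper gives no proof of this lemma; it records the statement and cites \cite[Lemma~3.1]{botnanAlgebraicStabilityZigzag2018}. Your proposal is therefore already more explicit than the paper, and the asymptotic-reach argument you outline---distinguishing the four block types by which of the directions $c\to-\infty$, $d\to+\infty$ are attained in the support, and then observing that an $\epsilon$-interleaving transports $2\epsilon$-robust points and hence these unboundedness profiles---is correct and captures the geometric heart of the nondegenerate case.

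There is, however, a genuine issue in your handling of the degenerate case. You anticipate that a refined rank-tracking argument will ``derive a contradiction when the types differ'', but no such contradiction exists: if both $I$ and $I'$ have empty $2\epsilon$-robust subsets, then $\phi^{\k I}_{2\epsilon}$ and $\phi^{\k I'}_{2\epsilon}$ both vanish, and the zero maps already furnish a valid $\epsilon$-interleaving between $\k I$ and $\k I'$ regardless of their types. Concretely, $\k(0,1)_{\BL}$ and $\k[0,1)_{\BL}$ are $1$-interleaved yet of different types, so under the interleaving-based definition of $\epsilon$-matching adopted in this paper they \emph{can} be matched, and the lemma as literally stated fails in this corner. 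What is true---and what suffices for every downstream use, in particular the isometry theorem---is that any $\epsilon$-matching can be replaced by a type-respecting one, since $\epsilon$-trivial pairs may simply be unmatched without violating the definition. Your deferral to \cite[Lemma~3.1]{botnanAlgebraicStabilityZigzag2018} thus points to the right source, but the case analysis there does not yield the contradiction you expect; rather, the correct takeaway is the weaker (and sufficient) existence statement just described.
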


\subsection{Extension of zigzag to block-decomposable modules} Consider the inclusion $\io\colon\ZZ\hookrightarrow\RR$. By composing first the left Kan extension along $\io$ and then the restriction functor of the inclusion $\cU\to\RR$, one obtains the \deff{block extension functor} $E\colon\Vec^{\ZZ}\to\Vec^{\cU}$; we refer to~\cite{botnanAlgebraicStabilityZigzag2018,bjerkevikStabilityIntervalDecomposable2021} for more context on this functor and the definitions involved. The block extension functor preserves direct sums, and sends barcodes of zigzag modules to barcodes of block-decomposable modules, by which we mean the following. See~\cref{fig:blocks_zigzag} for an illustration.
\begin{lemma}\label{lem:corr_zigzag_blocks}
  Let $M$ be a zigzag module. Then $E(M)$ is block-decomposable, and
  \begin{align*}
    &E(\k [a,b]_{\ZZ}) = \k [a, b]_{\BL}, &E(\k[a, b)_{\ZZ}) &= \k[a, b)_{\ZZ},\\
    &E(\k (a, b]_{\ZZ}) = \k (a, b]_{\BL}, &E(\k(a, b)_{\ZZ}) &= \k (a, b)_{\BL}.
  \end{align*}
  Thus, $\barcode(E(M)) = E(\barcode(M))$.
\end{lemma}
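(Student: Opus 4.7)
The plan is to reduce to interval modules and then verify each of the four formulas by a direct pointwise computation of the left Kan extension. By Botnan's decomposition theorem for pointwise finite-dimensional modules over $\ZZ$ cited above, every zigzag module decomposes as $M\iso\bigoplus_{I\in\barcode(M)}\k I$. The left Kan extension along $\io$ is a left adjoint, hence preserves all colimits and in particular direct sums; the restriction functor $\Vec^{\RR}\to\Vec^{\cU}$ preserves direct sums trivially. So $E$ preserves direct sums, and I would restrict attention to interval modules, after which the statement $\barcode(E(M))=E(\barcode(M))$ follows at once.

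To compute $E(\k I)$ for an interval $I$ of $\ZZ$, I would evaluate it pointwise. For $(x,y)\in\cU$, the pointwise formula for left Kan extensions gives
\[
E(\k I)(x,y) \;=\; \operatorname*{colim}_{(c,d)\in\io/(x,y)}\k I(c,d),
\]
where $\io/(x,y)=\Set{(c,d)\in\ZZ \given c\ge x,\,d\le y}$. Since $\k I$ is constantly $\k$ on $I$ with identity transitions and vanishes off $I$, and since the intersection $I\cap\io/(x,y)$ is a convex sub-path of the zigzag, hence connected, whenever it is non-empty, this colimit equals $\k$ exactly when the intersection is non-empty and equals $0$ otherwise. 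The transition maps of $E(\k I)$ are then identities on the support, so $E(\k I)$ is automatically an interval module over $\cU$.

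It remains to identify the support in each of the four cases, which reduces to elementary integer geometry in $\ZZ$. For $I=[a,b]_{\ZZ}$, I expect the intersection to be non-empty iff $x\le b$ and $a\le y$, exactly describing $[a,b]_{\BL}\cap\cU$. For $I=[a,b)_{\ZZ}$, the interval excludes the points of $\ZZ$ with second coordinate equal to $b$, and the non-emptiness condition becomes $x\le b$ together with $a\le y<b$, matching $[a,b)_{\BL}$; the cases $(a,b]_{\ZZ}$ and $(a,b)_{\ZZ}$ are handled symmetrically. The only real subtlety will be bookkeeping: keeping the reversed order in the first coordinate of $\RR$ straight and matching the four boundary conventions in $\ZZ$ with those of the blocks in $\cU$. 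Once the slice and its intersection with $I$ are written out explicitly, each case is routine, and no further ingredients beyond those already in Botnan--Lesnick and Bjerkevik are required.
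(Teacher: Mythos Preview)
The paper does not prove this lemma itself; it is stated as a known fact and the reader is referred to Botnan--Lesnick and Bjerkevik. Your outline via the pointwise Kan extension formula is exactly the standard route taken in those references, so in spirit you are on the same track.

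There is, however, a genuine gap in your argument. You claim that
\[
\operatorname*{colim}_{(c,d)\in\io/(x,y)}\k I(c,d)\;=\;\k
\quad\text{iff}\quad I\cap\io/(x,y)\neq\varnothing,
\]
but this is false for the half-open and open zigzag intervals. Take $I=[a,b)_{\ZZ}$ and any $(x,y)\in\cU$ with $a\le y$ and $b\le y$ (and $x\le b$). Then $(b,b-1)\in I\cap\io/(x,y)$, so the intersection is non-empty, yet the slice also contains the sink $(b,b)\notin I$, and the arrow $(b,b-1)\to(b,b)$ forces the cocone map out of the $\k$ at $(b,b-1)$ to factor through $0$. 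Since the $\k$'s over $I\cap\io/(x,y)$ are all identified by identity arrows from the sources, the whole colimit collapses to $0$. In your case analysis you silently added the condition $y<b$ as part of ``non-emptiness'', but that condition does \emph{not} come from non-emptiness of the intersection; it comes from this collapsing phenomenon.

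The correct criterion is: the colimit at $(x,y)$ equals $\k$ precisely when $I\cap\io/(x,y)\neq\varnothing$ \emph{and} the slice $\io/(x,y)$ contains no maximal element $(c,c)$ lying outside $I$ but adjacent to a source in $I$. Concretely, for the closed interval $[a,b]_{\ZZ}$ both endpoints are sinks, so no collapsing is possible and your non-emptiness check suffices; for $[a,b)_{\ZZ}$ the right endpoint $(b,b-1)$ is a source and collapsing occurs exactly when $(b,b)\in\io/(x,y)$, i.e.\ when $y\ge b$; for $(a,b]_{\ZZ}$ the left endpoint $(a+1,a)$ is a source and collapsing occurs when $(a,a)\in\io/(x,y)$, i.e.\ when $x\le a$; for $(a,b)_{\ZZ}$ both endpoints are sources and either condition kills the colimit. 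Plugging this in yields the four block supports, as desired. So your final identifications are correct, but you need this extra ``no adjacent outside sink'' condition to justify them.
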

\begin{figure}
  \centering
  \includegraphics{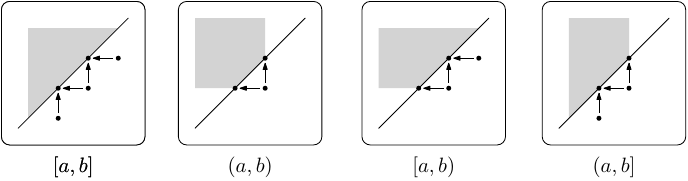}
  \caption{Illustration of the correspondence between the interval of $\ZZ$ and the block of $\cU$, as in~\cref{lem:corr_zigzag_blocks}}~\label{fig:blocks_zigzag}
\end{figure}

As in~\cite{botnanAlgebraicStabilityZigzag2018,bjerkevikStabilityIntervalDecomposable2021}, this lemma motivates the definition of interleaving and bottleneck distance of two zigzag modules $M$ and $N$ by setting
\begin{equation*}
  d_{I}(M, N) \coloneqq d_{I}(E(M), E(N)) \text{ and } d_{B}(M, N) \coloneqq d_{B}(E(M), E(N)).
\end{equation*}
As such,~\cref{thm:block_isometry} translates directly to zigzag modules.

\subsection{Algebraic stability of bipath persistence} Recall the restriction functor
$R\colon \Vec^{B}\to\Vec^{\ZZ}$ induced by the covering map defined in~\cref{def:covering}. Also recall that by~\cref{thm:barcodes}, $R$ preserves arc codes up to the action of $\Z$. In turn, and in parallel to the definition of interleaving and bottleneck distances in zigzag modules, for two bipath modules $M$ and $N$, we define
\begin{equation*}
  d_{I}(M, N) \coloneqq d_{I}(R(M), R(N)) \text{ and } d_{B}(M, N) \coloneqq d_{B}(R(M), R(N)).
\end{equation*}
Combining this with the block isometry theorem (\cref{thm:block_isometry}), we immediately obtain the isometry theorem for bipath persistence.
\begin{theorem}[Bipath isometry theorem]
Let $M$ and $N$ be bipath modules. Then $d_{I}(M, N) = d_{B}(M, N)$.
\end{theorem}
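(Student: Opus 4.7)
The plan is to chase definitions and reduce the statement to the block isometry theorem (\cref{thm:block_isometry}). By construction, for bipath modules we set
\[
d_{I}(M, N) = d_{I}(R(M), R(N)) \quad\text{and}\quad d_{B}(M, N) = d_{B}(R(M), R(N)),
\]
and for zigzag modules we in turn set
\[
d_{I}(R(M), R(N)) = d_{I}(E(R(M)), E(R(N))) \quad\text{and}\quad d_{B}(R(M), R(N)) = d_{B}(E(R(M)), E(R(N))).
\]
So both sides of the desired equality unfold, by design, to the same pair of distances between the $\cU$-modules $E(R(M))$ and $E(R(N))$.

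To invoke \cref{thm:block_isometry}, I need these two $\cU$-modules to be block-decomposable. This follows from \cref{lem:corr_zigzag_blocks}: the zigzag modules $R(M)$ and $R(N)$ are pointwise finite-dimensional (since $M$ and $N$ are, and $R$ is defined by precomposition with $\zeta$), hence decompose as direct sums of interval modules over $\ZZ$ by Botnan's theorem; then $E$ sends each such interval module to a block module and preserves direct sums, so $E(R(M))$ and $E(R(N))$ are indeed direct sums of block modules. At this point \cref{thm:block_isometry} yields
\[
d_{I}(E(R(M)), E(R(N))) = d_{B}(E(R(M)), E(R(N))),
\]
which, unwinding the two chains of definitions, is exactly $d_{I}(M, N) = d_{B}(M, N)$.

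There is no real obstacle to overcome here: the statement is a formal consequence of the way the distances on bipath modules were defined via the covering map $\zeta$, together with the zigzag-to-block extension $E$. The mathematical content has already been absorbed into the earlier results, namely the identification of $R(\k I)$ for each interval $I$ of $B$ in \cref{lem:bip-zz-int-corr} (which guarantees that $R$ lands in pointwise finite-dimensional zigzag modules), \cref{lem:corr_zigzag_blocks} (which guarantees block-decomposability of the image of $E$), and Bjerkevik's block isometry theorem. The whole proof is therefore a three-line composition of these inputs and requires no further argument.
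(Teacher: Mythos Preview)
Your proposal is correct and matches the paper's approach exactly: the paper states the theorem as an immediate consequence of the definitions of $d_{I}$ and $d_{B}$ for bipath modules together with the block isometry theorem (\cref{thm:block_isometry}), and you have simply spelled out this unwinding in detail. The extra care you take in noting that $E(R(M))$ and $E(R(N))$ are block-decomposable via \cref{lem:corr_zigzag_blocks} is a reasonable elaboration of what the paper leaves implicit.
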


Recall that the $\epsilon$-matching between $\barcode(R(M))$ and $\barcode(R(N))$ is indeed the matching between infinite periodic barcodes, but can be made equivariant in the following sense:

\begin{lemma}
Let $M$ and $N$ be bipath modules. If there exists an $\epsilon$-matching between $\barcode(R(M))$ and $\barcode(R(N))$, then there exists an $\epsilon$-matching given by a bijection $\sigma\colon C\to D$, for some $C\subset \barcode(R(M))$ and $D\subset\barcode(R(N))$ that is equivariant with respect to the action of $\Z$, meaning that
  \begin{equation*}
    \sigma(z\uparrow I) = z\uparrow \sigma(I),
  \end{equation*}
  for every $I\in C$.
\end{lemma}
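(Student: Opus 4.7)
The plan is to reduce the problem to a finite bipartite matching on $\Z$-orbits and invoke Hall's theorem. First, I would expand multiplicities so that each interval in $\barcode(R(M))$ and $\barcode(R(N))$ carries a single-copy label; the $\Z$-action extends pointwise. Let $X^{*}$ and $Y^{*}$ denote the finite sets of $\Z$-orbits whose representative intervals are \emph{not} $\epsilon$-interleaved with $0$; call these the \emph{essential} orbits. This notion is $\Z$-invariant: by \cref{lem:corr_zigzag_blocks}, $E(\k(z\uparrow I))$ is the translate of $E(\k I)$ by $(zN, zN)$ along the diagonal of $\cU$, and being $\epsilon$-close to $0$ is preserved by such a translation. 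Every essential interval on the $M$-side must appear in $C$, for otherwise it would be $\epsilon$-interleaved with $0$; so the given matching $\tau$ restricts to a bijection between the essential parts.

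Next, I would build a bipartite graph $G$ with vertex set $X^{*}\sqcup Y^{*}$, placing an edge between $O_{M}$ and $O_{N}$ whenever, for fixed representatives $I_{0}\in O_{M},\, J_{0}\in O_{N}$, there exists $k\in\Z$ with $\k I_{0}$ and $\k(k\uparrow J_{0})$ being $\epsilon$-interleaved. By \cref{lem:structure_matchings}, only edges between orbits of equal block-type arise. For each such pair, the set $K_{O_{M},O_{N}}\subset\Z$ of valid offsets is finite, since among the discretely-many intervals of $\ZZ$ only boundedly many can be $\epsilon$-close to any given one. Given a perfect matching $\pi\colon X^{*}\to Y^{*}$ in $G$ together with a choice $k_{O_{M}}\in K_{O_{M},\pi(O_{M})}$ for each $O_{M}$, the formula
\begin{equation*}
\sigma(z\uparrow I_{0,O_{M}})\coloneqq (z+k_{O_{M}})\uparrow J_{0,\pi(O_{M})}
\end{equation*}
is tautologically $\Z$-equivariant, matches pairs that are $\epsilon$-interleaved, and leaves unmatched only non-essential orbits (whose intervals are $\epsilon$-close to $0$).

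The hard part is verifying Hall's marriage condition $|N(A)|\ge|A|$ in $G$ for every $A\subset X^{*}$. I would prove this by counting within a finite window and letting the window grow. For $L\in\N$, set $W_{L}\coloneqq\bigsqcup_{O_{M}\in A}\Set{z\uparrow I_{0,O_{M}} : 0\le z<L}$, a set of $|A|\cdot L$ elements. By the $\epsilon$-interleaving constraint on $\tau$, the image $\tau(W_{L})$ is contained in $\bigsqcup_{O_{N}\in N(A)}\Set{w\uparrow J_{0,O_{N}} : -c\le w<L+c}$, a set of $|N(A)|(L+2c)$ elements, where $c\coloneqq\max\Set{|k| : k\in K_{O_{M},O_{N}},\, O_{M}\in A,\, O_{N}\in N(A)}<\infty$. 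Injectivity of $\tau$ gives $|A|\cdot L\le|N(A)|(L+2c)$, and letting $L\to\infty$ yields $|A|\le|N(A)|$. Applying the symmetric argument from the $N$-side forces $|X^{*}|=|Y^{*}|$, and Hall's theorem then supplies the perfect matching $\pi$, completing the construction of $\sigma$.
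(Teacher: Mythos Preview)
Your route via Hall's theorem and a density count is quite different from the paper's. The paper proceeds directly: it picks one orbit $O_I$ at a time and overwrites the matching there by $\sigma(z'\uparrow I)\coloneqq z'\uparrow\tau(I)$, keeping $\tau$ on the remaining orbits; since the number of orbits is finite this terminates, and translation-invariance of the interleaving distance guarantees each new pair is still $\epsilon$-interleaved. No combinatorial optimisation is invoked.

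Your argument, as written, has a gap. The assertion that ``the given matching $\tau$ restricts to a bijection between the essential parts'' is unjustified: an interval $I$ that is not $\epsilon$-interleaved with $0$ can be $\epsilon$-matched to an interval $J$ that \emph{is}, since the triangle inequality only yields $d_I(\k I,0)\le 2\epsilon$. This already occurs among blocks of type $[a,b)_{\BL}$ arising from top bipath intervals (take lengths just above and just below $2\epsilon$ with nearby endpoints), so nothing in the bipath setting rules it out. Consequently $\tau$ may send elements of $W_L$ into non-essential orbits lying outside $Y^{*}$, the claimed containment $\tau(W_L)\subset\bigsqcup_{O_N\in N(A)}\{\ldots\}$ fails, and the inequality $|A|\cdot L\le|N(A)|(L+2c)$ does not follow. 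The strategy is repairable: build $G$ on \emph{all} orbits $X\sqcup Y$ rather than on $X^{*}\sqcup Y^{*}$, so that the density count gives Hall's condition for every $A\subset X^{*}$ with neighborhoods taken in $Y$; this produces a matching saturating $X^{*}$, a symmetric one saturating $Y^{*}$, and a standard alternating-path argument combines them into a single matching saturating both, after which your equivariant lift goes through.
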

\begin{proof}
Take an $\epsilon$-matching between $\barcode(R(M))$ and $\barcode(R(N))$ given
by the bijection $\tau\colon E\to F$. Suppose that there exist some $z\in \Z$
and $I\in E$ such that $\tau(z\uparrow I) \neq z\uparrow\tau(I)$. For such the
interval $I$, we can construct a new $\epsilon$-matching given by the bijection
$\sigma\colon E\cup\Set{z'\uparrow I\given z'\in\Z}\to F\cup\Set{z'\uparrow \tau(I)\given z'\in\Z}$,
where $\sigma(z'\uparrow I)\coloneqq z'\uparrow \tau(I)$ for every $z'\in\Z$ and
$\sigma(J)\coloneqq \tau(J)$ for any $J\in E$ that is not in the orbit
$\Set{z'\uparrow I\given z'\in\Z}$ of $I$. We note the reader that
$\sigma(I) = \sigma(0\uparrow I) = 0\uparrow \tau(I) = \tau(I)$ by the
definition, it then follows that $\sigma(z'\uparrow I) = z'\uparrow \sigma(I)$
for every $z'\in \Z$. Finally, since there is a finite number of orbits in $\barcode(R(M))$, we can repeat the procedure just described to obtain the desired $\epsilon$-matching.
\end{proof}

Such equivariance suggests the following definition of $\epsilon$-matching between arc codes of bipath modules.
\begin{definition}
Let $M$ and $N$ be bipath modules. An $\epsilon$-matching between the arc codes $\barcode(M)$ and $\barcode(N)$ is given by a bijection $\sigma\colon C\to D$ for two subsets $C\subset\barcode(M)$ and $D\subset\barcode(N)$ such that
  \begin{itemize}
    \item every interval $I$ in $\barcode(M)\setminus C$ and
          $\barcode(N)\setminus D$ satisfies that $E\circ R(\k I)$ is
          $\epsilon$-interleaved with the zero module, and
    \item for every $I\in C$, the modules $E\circ R(\k I)$ and $E\circ R(\k\sigma(I))$
          are $\epsilon$-interleaved.
  \end{itemize}
\end{definition}
It is now clear that an equivalent definition of the bottleneck distance between bipath modules $M$ and $N$ can be given by
\begin{equation*}
  d_{B}(M, N) \coloneqq \inf\Set{\epsilon \given \text{there is an $\epsilon$-matching between $\barcode(M)$ and $\barcode(N)$}},
\end{equation*}
and such the $\epsilon$-matchings need to match intervals of the same type in the bipath poset, as in the situation for block-decomposable modules of~\cref{lem:structure_matchings}.

\section{Computation}\label{sec:computation}
We have related bipath persistence with an infinite periodic zigzag. We now show how to actually compute it, by taking a finite slice. This will allow us to decompose any bipath persistence module by decomposing a single finite zigzag.

Let $\slice{\ZZ}\subset\ZZ$ be the subposet of $\ZZ$ given by $\slice{\ZZ}\coloneqq (-m+1, n+m)_{\ZZ}$, which contains $(2n+4m-3)$ elements. We write $\slice{\zeta}\colon\slice{\ZZ} \to B$ for the restriction of the covering map $\zeta$ to $\slice{\ZZ}$. As before, we write $\slice{R}$ as the restriction functor induced by $\slice{\ze}$. In the following, we abuse the notation $(a,b)_{\ZZ}$ (resp. $[a,b]_{\ZZ}$, $[a,b)_{\ZZ}$, $(a,b]_{\ZZ}$) also for the interval $(a,b)_{\slice{\ZZ}}$ in $\slice{\ZZ}$ if there seems to be no ambiguity.

Since an arc code is a multiset of intervals, we write $\mu_{M}(I)$ for the multiplicity of the interval $I$ in the arc code $\barcode(M)$.

\begin{table}
\centering
\begin{tabular}{cc}
\hline
interval over $B$ & interval over $\slice{\ZZ}$ \\ \hline
\( B \) & \( \slice{\ZZ} \) \\
\( [i, 0]_{\lefti} \) & \( (0, i + 1)_{\ZZ} \) \\
\( [i, j]_{\lefti}\ (j\neq 0) \) & \( (- n - m + j, i + 1)_{\ZZ} \) \\
\( [i, j]_{\righti} \) & \( [i, j]_{\ZZ} \) \\
\( [i,j]_{\top} \) & \( [i, j + 1)_{\ZZ} \) \\
\( [i,j]_{\bot} \) & \( (i - 1, j]_{\ZZ} \) \\ \hline
\end{tabular}
\caption{Correspondence between the intervals in the equalities of multiplicities}\label{tab:equalities}
\end{table}

\begin{theorem}\label{thm:main-theorem}
The multiplicity of each interval in the arc code can be obtained from the multiplicity of the corresponding interval in the barcode of the finite zigzag $\slice{\ZZ}$, as in~\cref{tab:equalities}. Namely, the following equalities hold for any bipath module $M$.

\begin{itemize}
    \item For the full interval $B$, $\mu_M(B) = \mu_{\slice{R}(M)}(\slice{\ZZ})$.
    \item For a left interval $[i,j]_{\lefti}$:
\begin{itemize}
    \item if $j=0$, then $\mu_M(\k [i, 0]_{\lefti}) = \mu_{\slice{R}(M)}\left((0, i + 1)_{\ZZ}\right)$.
    \item if $j\neq 0$, then $\mu_M([i, j]_{\lefti}) = \mu_{\slice{R}(M)}\left((- n - m + j, i + 1)_{\ZZ}\right)$.
\end{itemize}  
    \item For a right interval $[i, j]_{\righti}$, $\mu_M([i,j]_{\righti}) = \mu_{\slice{R}(M)}\left([i, j]_{\ZZ}\right)$.
    \item For a top interval $[i, j]_{\top}$, $\mu_M([i,j]_{\top}) = \mu_{\slice{R}(M)}\left([i, j + 1)_{\ZZ}\right)$.
    \item For a bottom interval $[i, j]_{\bot}$, $\mu_M([i,j]_{\bot}) = \mu_{\slice{R}(M)}\left((i - 1, j]_{\ZZ}\right)$.
  \end{itemize}
\end{theorem}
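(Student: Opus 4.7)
The plan is to reduce to computing $\slice{R}(\k I)$ for each interval $I$ of $B$ and then comparing the result against~\cref{tab:equalities}. Since bipath modules are interval-decomposable and $\slice{R}$ equals the composition of $R$ with restriction along the inclusion $\slice{\ZZ}\hookrightarrow\ZZ$---hence preserves direct sums---from $M \cong \bigoplus_{I \in \barcode(M)} \k I$ we obtain $\slice{R}(M) \cong \bigoplus_{I \in \barcode(M)} \slice{R}(\k I)$. So it suffices to show two things for each interval type $I$ of $B$: first, that $\slice{R}(\k I)$ contains as a summand the interval module $\k J$ prescribed by~\cref{tab:equalities} with multiplicity exactly one; and second, that for any other bipath interval $I'$, whether of the same type with different endpoints or of a different type, $J$ does not appear as a summand of $\slice{R}(\k I')$.

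For the first claim, I would begin from~\cref{lem:bip-zz-int-corr}, which expresses $R(\k I)$ as a $\Z$-periodic direct sum $\bigoplus_{k\in\Z}\k J_k$ of interval modules over $\ZZ$ (or as $\k\ZZ$ in the full case). Restriction to $\slice{\ZZ}$ preserves direct sums and sends each $\k J_k$ to the interval module of $\slice{\ZZ}$ supported on $J_k\cap\slice{\ZZ}$. Since $\slice{\ZZ}$ spans just over one period of length $N = n+m-1$, only a few translates $J_k$ contribute nontrivially. The concrete verification, one case per interval type of $B$, is that exactly one translate lies entirely inside $\slice{\ZZ}$ and equals the interval associated with $I$ in~\cref{tab:equalities}.

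For the second claim, I would observe that any translate $J_k$ whose support meets $\slice{\ZZ}$ without being contained in it produces an interval of $\slice{\ZZ}$ with an endpoint pinned at the boundary value $-m+1$ or $n+m$. A direct comparison with the five interval types from~\cref{tab:equalities}, using their prescribed parameter ranges (for instance $i\in\Set{0,\dots,n-1}$ for left intervals), shows that no such boundary-truncated interval can coincide with an entry of~\cref{tab:equalities}. Combining the two claims then yields the claimed multiplicity equalities.

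The main obstacle is the finite but laborious case analysis underlying the second claim: for each of the five bipath interval types and each translate $J_k$ meeting $\slice{\ZZ}$ only partially, one must identify the precise form of the resulting truncated interval of $\slice{\ZZ}$ and verify its disjointness from all entries of~\cref{tab:equalities}. The asymmetric choice $\slice{\ZZ} \coloneqq (-m+1, n+m)_{\ZZ}$, slightly offset from a full period, is exactly what makes these disjointness checks work, since the truncated intervals carry distinguishing open endpoints at $-m+1$ or $n+m$ that do not appear in any entry of~\cref{tab:equalities}.
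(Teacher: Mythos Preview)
Your strategy coincides with the paper's: decompose $M$ into interval modules, apply the additive functor $\slice{R}$, compute $\slice{R}(\k I)$ for each bipath interval $I$ by intersecting the periodic summands from \cref{lem:bip-zz-int-corr} with $\slice{\ZZ}$, and check that the interval from \cref{tab:equalities} appears in $\slice{R}(\k I)$ and in no other $\slice{R}(\k I')$. The paper carries this out by explicitly listing all summands of $\slice{R}(\k I)$ for each type and then observing the required disjointness.

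There is, however, a small inaccuracy in your structural shortcut. The claim that ``exactly one translate lies entirely inside $\slice{\ZZ}$'' is not true in every case: for a bottom interval $[i,j]_{\bot}$ both the $k=0$ translate $(i-1,j]_{\ZZ}$ and the $k=-1$ translate $(i-n-m,\,j-n-m+1]_{\ZZ}$ sit entirely inside $\slice{\ZZ}$, so $\slice{R}(\k[i,j]_{\bot})$ has two untruncated summands. Likewise, your heuristic that ``no entry of \cref{tab:equalities} has an endpoint at $-m+1$ or $n+m$'' fails at the boundary of the parameter range: the left interval $[i,n+1]_{\lefti}$ has table entry $(-m+1,i+1)_{\ZZ}$. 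Neither of these actually breaks the argument---the extra untruncated summand for bottom intervals has left endpoint at most $-1$, hence cannot coincide with any table entry of type $(\cdot,\cdot]$, whose left endpoints are at least $n$; and the relevant truncated pieces are distinguished from table entries by their bracket type or by the other endpoint---but it does mean the disjointness cannot be read off purely from ``pinned at $-m+1$ or $n+m$''. You end up needing exactly the explicit enumeration the paper performs.
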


\begin{proof}
Since every bipath module is interval-decomposable, we write the indecomposable decomposition of the bipath module $M$ as
\begin{align*}
    M \iso & (\k B)^{\mu_M(B)}\ds \DS\k [i, j]_{\lefti}^{\mu_M([i, j]_{\lefti})} \ds \DS\k [i, j]_{\righti}^{\mu_M([i, j]_{\righti})}\\
    &\quad\quad \ds \DS\k [i, j]_{\top}^{\mu_M([i, j]_{\top})}\ds \DS\k [i, j]_{\bot}^{\mu_M([i, j]_{\bot})}.
\end{align*}
Applying the restriction functor $\slice{R}$ yields the decomposition of $\slice{R}(M)$:
\begin{align}\label{eq:res-of-M-decomp}
    \slice{R}(M) \iso & \slice{R}(\k B)^{\mu_{\slice{R}(M)}(\ZZ)}\ds \DS \slice{R}(\k [i, j]_{\lefti})^{\mu_M([i, j]_{\lefti})} \ds \DS \slice{R}(\k [i, j]_{\righti})^{\mu_M([i, j]_{\righti})}\nonumber\\
    &\quad\quad \ds \DS \slice{R}(\k [i, j]_{\top})^{\mu_M([i, j]_{\top})}\ds \DS \slice{R}(\k [i, j]_{\bot})^{\mu_M([i, j]_{\bot})}.
\end{align}
We focus on the restriction of interval modules in \eqref{eq:res-of-M-decomp}. It is not difficult to see the following from Lemma~\ref{lem:bip-zz-int-corr}.
\begin{itemize}
\item For the full interval $B$, $\slice{R}(\k B) = \k \slice{\ZZ}$.
\item For the left interval $[i, j]_{\lefti}$, we have
\[
\slice{R}(\k [i, j]_{\lefti}) = \begin{cases}
\k (0, i+1)_{\ZZ}, & j = 0,\\
\k (-n-m+j, i+1)_{\ZZ}\ds \k (j-1, n+m)_{\ZZ}, & j \neq 0.
\end{cases}
\]
\item For the right interval $[i, j]_{\righti}$, we have
\[
\slice{R}(\k [i, j]_{\righti}) = \begin{cases}
\k [i, n]_{\ZZ}, & j = n,\\
\k [i, j]_{\ZZ}\ds \k (-m+1, -n-m+j+1]_{\ZZ}, & j \neq n.
\end{cases}
\]
\item For the top interval $[i,j]_{\top}$, $\slice{R}(\k [i,j]_{\top}) = \k [i,j+1)_{\ZZ}$.
\item For the bottom interval $[i,j]_{\bot}$, $\slice{R}(\k [i,j]_{\bot}) = \k (i-1,j]_{\ZZ}\ds \k (-n-m+i, -n-m+j+1]_{\ZZ}$.
\end{itemize}

From the above list, it turns out that every zigzag module that appeared in the right-hand side of multiplicity equalities is the indecomposable direct summand of the corresponding $\slice{R}(\k I)^{\mu_M(\k I)}$, and is not the direct summand of $\DS_{J\neq I} \slice{R}(\k J)^{\mu_M(\k J)}$ in \eqref{eq:res-of-M-decomp}. Therefore, the assertions follow.
\end{proof}

\begin{remark}
We emphasize here that to compute the multiplicity of an interval $I$ it is crucial to find the corresponding indecomposable module of $\slice{R}(M)$. For example, to compute $\mu_M([0, j]_{\lefti})$ ($j\neq 0$), the interval $(j-1, n+m)_{\ZZ}$ is not the desired corresponding interval because it also corresponds to the direct summand $\slice{R}(\k [1, j]_{\lefti})$.
\end{remark}

\section{Fibered bar and arc codes}\label{sec:fibered arccode}
Lesnick and Wright \cite{lesnickInteractiveVisualization2D2015} introduced the
\deff{fibered barcode}, an invariant of persistence modules over $\R^{2}$ given
by the collection of barcodes of all 1-D affine slices. In detail, denoting by $L$
the collection of all (unparametrized) lines in $\R^{2}$ with non-negative slope
and for a fixed persistence module $M\colon\R^{2}\to\Vec$, each line $l\in L$
induces a single-parameter persistence module $M|_{l} = M\circ i_{L}$ along the
inclusion $i_{L}\colon l\hookrightarrow\R^{2}$. The \deff{fibered barcode} of $M$ is then the assignment
of each line $l\in L$ to the barcode of $M|_{l}$.

Closely following the construction of the fibered barcode, and due to bipath
modules having arc codes, given a bipath poset $B$ and a persistence module $M\colon\R^{2}\to\Vec$, an order-preserving
map $f\colon B\to\R^{2}$ induces a bipath module $M|_{f} = M\circ f$ and we can
define the \deff{fibered arc code}:

\begin{definition}\label{def:fibered arccode}
  The \deff{fibered arc code} of a persistence module
  $M\colon \mathbb{R}^{2} \to \text{Vec}$ is the assignment of each
  order-preserving map $f\colon B \to \mathbb{R}^{2}$, where $B$ is a bipath
  poset, to the arc code of $M|_{f}$.
\end{definition}

As already known, the fibered barcode is an incomplete invariant. The same
applies to the fibered arc code. Still, the fibered arc code gives more
information than the fibered barcode: in~\cref{exm:fibered arccode_vs_fibered
  barcode} below we show two non-isomorphic persistence modules with equal
fibered barcode, but different fibered arc code. In addition,
in~\cref{exm:fibered arccode_vs_interval rank invariant} we show two
non-isomorphic persistence modules with equal fibered arc code. To distinguish
the modules in this last example, one would need to use different invariants
like the~\emph{interval rank invariant}, proposed by Asashiba, Gauthier and
Liu~\cite{asashiba2024interval}, or other related invariants like the
\emph{generalized rank invariant}, proposed by Kim and
M\'emoli~\cite{kimGeneralizedPersistenceDiagrams2021}.

\begin{example}\label{exm:fibered arccode_vs_fibered barcode}
We take an example from Asashiba, Gauthier and Liu~\cite[Example 6.2]{asashiba2024interval}.
Let $G$ be the grid
\[
G\coloneqq\begin{tikzcd}[ampersand replacement=\&, column sep=1em, row sep=2em, nodes={scale=0.8}]
	{(2,1)} \& {(2,2)} \& {(2,3)} \& {(2,4)} \& {(2,5)} \\
	{(1,1)} \& {(1,2)} \& {(1,3)} \& {(1,4)} \& {(1,5)}
	\arrow[from=1-1, to=1-2]
	\arrow[from=1-2, to=1-3]
	\arrow[from=1-3, to=1-4]
	\arrow[from=1-4, to=1-5]
	\arrow[from=2-1, to=1-1]
	\arrow[from=2-1, to=2-2]
	\arrow[from=2-2, to=1-2]
	\arrow[from=2-2, to=2-3]
	\arrow[from=2-3, to=1-3]
	\arrow[from=2-3, to=2-4]
	\arrow[from=2-4, to=1-4]
	\arrow[from=2-4, to=2-5]
	\arrow[from=2-5, to=1-5]
\end{tikzcd}\]
and let $M_\lambda$, for a $\lambda\in\k$, be the following persistence module over  $G$:
\[
M_\lambda \coloneqq
\begin{tikzcd}[column sep=1.5em, row sep=1em, nodes={scale=1.2}]
\k & {\k^2} & {\k^2} & \k & 0 \\
0 & \k & {\k^2} & {\k^2} & \k
\arrow["{\binom{1}{0}}", from=1-1, to=1-2]
\arrow["\Id", from=1-2, to=1-3]
\arrow["{(\lambda,-1)}", from=1-3, to=1-4]
\arrow[from=1-4, to=1-5]
\arrow[from=2-1, to=1-1]
\arrow[from=2-1, to=2-2]
\arrow["{\binom{0}{1}}"', from=2-2, to=1-2]
\arrow["{\binom{0}{1}}"', from=2-2, to=2-3]
\arrow["\Id"', from=2-3, to=2-4]
\arrow["{(1,-1)}"', from=2-4, to=2-5]
\arrow["\Id"', from=2-3, to=1-3]
\arrow["{(\lambda,-1)}"', from=2-4, to=1-4]
\arrow[from=2-5, to=1-5]
\end{tikzcd}.
\]
We focus on the two non-isomorphic modules $M_1$ and $M_{-1}$. Let $B$ the
bipath poset with $n=3$ and $m= 1$ and consider the order-preserving map
$f\colon B\to G$ whose image is
\[
  \begin{tikzcd}[ampersand replacement=\&, column sep=1em, row sep=2em]
	{(2,3)} \& {(2,4)} \& {(2,5)} \\
	{(1,3)} \&\& {(1,5)}
	\arrow[from=1-1, to=1-2]
	\arrow[from=1-2, to=1-3]
	\arrow[from=2-1, to=1-1]
	\arrow[from=2-1, to=2-3]
	\arrow[from=2-3, to=1-3]
\end{tikzcd}.\]
Then the arc code $\barcode(M_1|_f)$ consists of two intervals of $B$ that we
depict by
\[
\begin{tikzcd}[ampersand replacement=\&, column sep=1em, row sep=2em]
	{(2,3)} \& {(2,4)} \\
	{(1,3)} \&\& {(1,5)}
	\arrow[from=1-1, to=1-2]
	\arrow[from=2-1, to=1-1]
	\arrow[from=2-1, to=2-3]
\end{tikzcd}
\text{and}
\begin{tikzcd}[ampersand replacement=\&]
	{(2,3)} \\
	{(1,3)}
	\arrow[from=2-1, to=1-1]
\end{tikzcd},
\]
while the arc code $\barcode(M_{-1}|_f)$ consists of two other intervals of $B$ that we depict by
\[
\begin{tikzcd}[ampersand replacement=\&]
	{(2,3)} \\
	{(1,3)} \&\& {(1,5)}
	\arrow[from=2-1, to=1-1]
	\arrow[from=2-1, to=2-3]
\end{tikzcd}
\text{and}
\begin{tikzcd}[ampersand replacement=\&, column sep=1em, row sep=2em]
	{(2,3)} \& {(2,4)} \\
	{(1,3)}
	\arrow[from=1-1, to=1-2]
	\arrow[from=2-1, to=1-1]
\end{tikzcd}.
\]
We conclude that the fibered arc codes of $M_1$ and $M_{-1}$ are different.

On the other hand, $M_1$ and $M_{-1}$ have the same fibered barcodes. It suffices to check three affine lines in red, as illustrated in \cref{fig:affine_lines}.
\begin{figure}[h]
  \centering
  \includegraphics[scale=0.6]{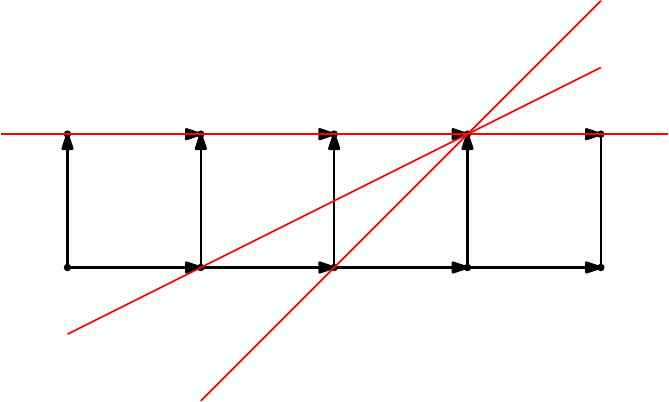}
  \caption{Illustration of the three affine lines used in~\cref{exm:fibered arccode_vs_fibered barcode}}\label{fig:affine_lines}
\end{figure}

For the red line $L_0$ with slope 0, $M_1|_{L_0}$ and $M_{-1}|_{L_0}$ have the same barcodes:
\[
\begin{tikzcd}
	{(2,1)} & {(2,2)} & {(2,3)} & {(2,4)}
	\arrow[from=1-1, to=1-2]
	\arrow[from=1-2, to=1-3]
	\arrow[from=1-3, to=1-4]
\end{tikzcd}
\text{and}
\begin{tikzcd}
	{(2,2)} & {(2,3)}
	\arrow[from=1-1, to=1-2]
\end{tikzcd}.
\]
The same happens for the red line $L_{1}$ with slope $1$, where the barcodes are
given by
\[
\begin{tikzcd}
	{(1,3)} & {(2,4)}
	\arrow[from=1-1, to=1-2]
\end{tikzcd}
\text{and}
\begin{tikzcd}
	{(1,3)}
\end{tikzcd},
\]
and for the red line $L_{1/2}$ with slope $1/2$, where the barcodes are
\[
\begin{tikzcd}
	{(1,2)} & {(2,4)}
	\arrow[from=1-1, to=1-2]
\end{tikzcd}.
\]
\end{example}

\begin{example}\label{exm:fibered arccode_vs_interval rank invariant}
Consider two bifiltrations on $3$ points illustrated in \cref{fig:Box_Examples}.
\begin{figure}[h]
  \centering
  \includegraphics[scale=0.6]{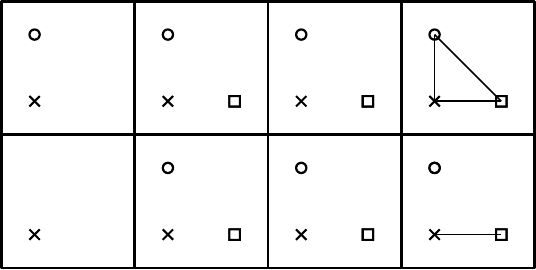}
  \quad\quad
  \includegraphics[scale=0.6]{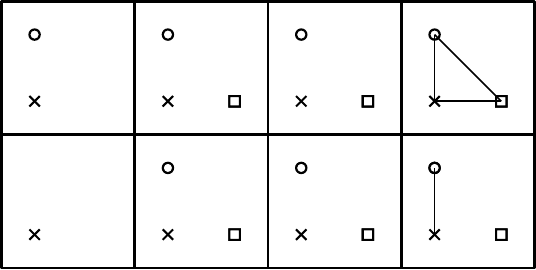}
  \caption{Two different filtrations}\label{fig:Box_Examples}
\end{figure}

We apply the 0-th reduced homology, with the coefficient field
$\k \coloneqq \Z_2$, on both. The persistent homology obtained from the left
(resp. right) filtration is denoted by $M$ (resp. $N$). It is not hard to verify
that $M$ and $N$ share the same fibered arc code. For instance, restricting to
the order-preserving map $f\colon B\to Q$, where $Q$ is the grid, depicted by
\begin{center}
  \includegraphics[scale=0.6]{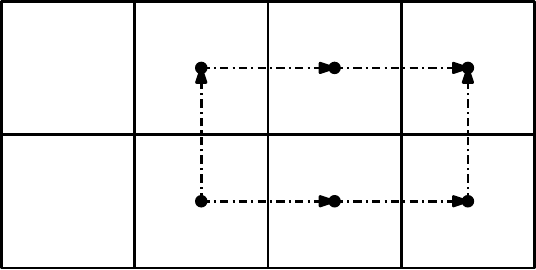}
\end{center}
yields that arc codes $\barcode(M|_{f})$ and $\barcode(N|_{f})$ share the same intervals as illustrated below:
\begin{center}
  \includegraphics[scale=0.6]{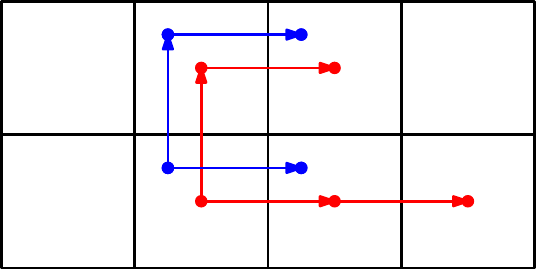}
\end{center}

On the other hand, we can distinguish these two filtrations by using the
\deff{interval rank invariant}~\cite{asashiba2024interval}. More concretely, if we
take the interval $J$ of the grid illustrated in \cref{fig:interval_in_CL4} and
use the source-sink compression system (see~\cite{asashiba2024interval}), then
the $J$-rank of $M$ is $1$ while that of $N$ is $0$.
\begin{figure}[h]
  \centering
  \includegraphics[scale=0.6]{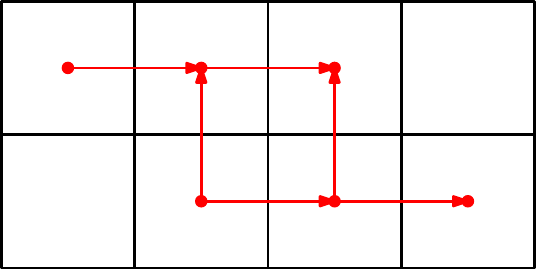}
  \caption{An interval of the grid}\label{fig:interval_in_CL4}
\end{figure}
\end{example}

\bibliographystyle{plainurl_fulljournal}
\bibliography{refs.bib}

\end{document}